\documentclass[11pt]{article}
\usepackage{amsmath, amsthm, amssymb,amscd, amsfonts}

\usepackage{tkz-euclide, float}
\usetikzlibrary{shapes, calc} 
\usetikzlibrary{bbox}

\usepackage{enumerate, enumitem, bbm, bm}

\usepackage{fullpage, verbatim, subcaption, array, tabularx, comment}
\captionsetup{font=footnotesize, width=0.9\textwidth}

\usepackage{tikz, xcolor, graphicx}
\tikzstyle{uStyle}=[shape = circle, minimum size = 20pt, inner sep =2.5pt, outer sep = 0pt, draw, fill=white]
\tikzstyle{myStyle}=[shape = circle, draw, fill=black, scale=0.5]

\usepackage{url,hyperref}
\hypersetup{colorlinks=true,linkcolor=black,anchorcolor=blue,citecolor=black,filecolor=blue,urlcolor=blue,bookmarksnumbered=true,pdfview=FitB}

\usepackage{thmtools}
\usepackage{thm-restate}
\usepackage{soul}
\usepackage{cleveref}
\usepackage[all,arc]{xy}
\usepackage{mathrsfs}
\usepackage{mathtools}

\usepackage{todonotes}

\RequirePackage{marginnote,hyperref}
\addtolength{\marginparwidth}{-5pt}
\addtolength{\marginparsep}{0pt}
\newcommand{\aside}[1]{\marginnote{\scriptsize{#1}}[0cm]}
\newcommand{\aaside}[2]{\marginnote{\scriptsize{#1}}[#2]}
\newcommand\Emph[1]{\emph{#1}\aside{#1}}
\newcommand\EmphE[2]{\emph{#1}\aaside{#1}{#2}}

\newtheorem{lem}{Lemma}
\newtheorem{cor}[lem]{Corollary}
\newtheorem{claim}{Claim}
\newtheorem*{mainthm}{Main Theorem}
\newtheorem*{key-lem}{Key Lemma}

\newcommand{\vph}{\varphi}
\def\diam{\textrm{diam}}
\def\dist{\textrm{dist}}
\def\ch{\textrm{ch}}
\newcommand{\vw}{v^*w}
\newcommand{\Vz}{\widehat{V}}

\title{5-Coloring Reconfiguration of Planar Graphs \\ with No Short Odd Cycles}
\author{Daniel W. Cranston\thanks{Department of Computer Science, Virginia Commonwealth University, Richmond, VA, USA; \texttt{dcranston@vcu.edu}} \and Reem Mahmoud\thanks{Department of Computer Science, Virginia Commonwealth University, Richmond, VA, USA; \texttt{mahmoudr@vcu.edu}}}

\date{\today}
\newcommand\C{\mathcal{C}}

\begin{document}

\maketitle

\begin{abstract}
The coloring reconfiguration graph $\mathcal{C}_k(G)$ has as its vertex set all
the proper $k$-colorings of $G$, and two vertices in $\mathcal{C}_k(G)$ are
adjacent if their corresponding $k$-colorings differ on a single vertex.
Cereceda conjectured that if an $n$-vertex graph $G$ is $d$-degenerate and
$k\geq d+2$, then the diameter of $\mathcal{C}_k(G)$ is $O(n^2)$. Bousquet and
Heinrich proved 
that if $G$ is planar and bipartite, then 
the diameter of $\mathcal{C}_5(G)$ is $O(n^2)$. 
(This proves Cereceda's Conjecture for every such graph with degeneracy 3.)
They also highlighted the particular case of
Cereceda's Conjecture when $G$ is planar and has no 3-cycles.
As a partial solution to this problem, we show that the diameter of
$\mathcal{C}_5(G)$ is $O(n^2)$ for every
planar graph $G$ with no 3-cycles and no 5-cycles.
\end{abstract}

\section{Introduction}

Given two $k$-colorings, $\vph_1$ and $\vph_2$, of a graph $G$, can we
transform $\vph_1$ to $\vph_2$ by a sequence of steps each of which recolors a
single vertex?  More precisely, we require that after each recoloring step
the resulting intermediate coloring is again a proper $k$-coloring.  It is
convenient to study the graph $\C_k(G)$ which has
as its vertices the set of all proper $k$-colorings of $G$, with vertices of
$\C_k(G)$ adjacent when their colorings differ on a single vertex of $G$.  Our
original question is answered yes precisely when $\vph_1$ and $\vph_2$ lie in
the same component of $\C_k(G)$.  Similarly, we can ask whether $\C_k(G)$ is
connected and, if it is, ask for bounds on its diameter in terms of $|G|$.

This area of study is known as \emph{reconfiguration}.  It is natural to ask
similar questions for any problem where we have a set of solutions (above,
these were proper $k$-colorings) and a well-defined reconfiguration step
(above, recoloring a single vertex).  Examples of topics studied through this
lens include perfect matchings, spanning trees, independent sets, dominating
sets, and solutions to an instance of 3-SAT, among many others.
For a thorough introduction to this
topic, we refer the reader to  surveys by Nishimura~\cite{Nishimura} and van
den Heuvel~\cite{vandenHeuvel}.

Throughout this paper, we use standard graph theory terminology and notation,
as in \cite{West}. Let $n:=|V(G)|$ and let $dist(v,w)$ denote the length of the
shortest $v,w$-path. The diameter of $G$, denoted \Emph{diam$(G)$}, is
$\max_{v,w\in V(G)}\dist(v,w)$. A proper $k$-coloring of a graph $G$ is a map
$\vph:V(G)\to[k]$, where $[k]:=\{1,\dots,k\}$, such that $\vph(v)\neq\vph(w)$
whenever $vw\in E(G)$. The \emph{reconfiguration graph} \Emph{$\mathcal{C}_k(G)$} has
as its vertex set all proper $k$-colorings of $G$, and two vertices in
$\mathcal{C}_k(G)$ are adjacent if their corresponding $k$-colorings differ on
a single vertex of $G$. 

Cereceda, van den Heuvel, and Johnson \cite{CvJ, CvJ2} were the first to study
the connectedness of $\mathcal{C}_k(G)$. Subsequently, many
results~\cite{BBFHMP, BousquetHeinrich, DvorakFeghali, Carl}
have focused on the diameter of $\mathcal{C}_k(G)$; in particular, these
papers attempt to tackle Cereceda's
Conjecture that diam$(\mathcal{C}_k(G))=O(n^2)$ whenever $G$ is
$d$-degenerate and $k\geq d+2$. 
(The bound $O(n^2)$ is best possible, as shown by Bonamy et al.~\cite{BJLPP})
Most results on the connectedness of
$\mathcal{C}_k(G)$ give an exponential upper bound on its diameter. Bartier et
al. \cite{BBFHMP} proved that $\diam(\mathcal{C}_5(G))=O(n)$ for every
planar graph $G$ of girth at least 6, while Dvo\v{r}\'{a}k and Feghali
\cite{DvorakFeghali} proved $\diam(\mathcal{C}_7(G))=O(n)$ for every
triangle-free planar graph $G$. Feghali~\cite{Carl} showed that if $d\geq1$ and
$k\geq d+1$, then for every $\epsilon>0$ and every graph $G$ with maximum
average degree $d-\epsilon$, we have $\diam(\mathcal{C}_k(G))=O(n(\log
n)^{d-1})$. 

In a recent breakthrough, Bousquet and Heinrich
\cite{BousquetHeinrich} proved that $\diam(\mathcal{C}_k(G))=O(n^{d+1})$
for every $d$-degenerate graph $G$ with $k\geq d+2$. Since planar graphs are
5-degenerate, and triangle-free planar graphs are 3-degenerate, their result
implies that $\diam(\mathcal{C}_7(G))=O(n^6)$ for every planar graph
$G$ and $\diam(\mathcal{C}_5(G))=O(n^4)$ for every
triangle-free planar graph $G$. In the same paper, they proved that
$\diam(\mathcal{C}_5(G))=O(n^2)$ for every bipartite planar graph $G$.
They also remarked that Cereceda's Conjecture remains open for triangle-free
planar graphs. Our Main Theorem is a step towards proving Cereceda's Conjecture
for all triangle-free planar graphs.

\begin{mainthm}
If $G$ is a 
planar graph with no 3-cycles and no 5-cycles, then
$\diam(\mathcal{C}_5(G))\le 4n^2$. 
\end{mainthm}

\section{Proof of Main Theorem}

Before starting the proof, we review some relevant (mostly standard)
definitions.

Let $G$ be a plane graph.  Denote by $F(G)$ the set of faces of $G$
and by $d(f)$ be the length of each such face $f$. A
\emph{$k$-vertex}\aaside{\mbox{$k$-/$k^+$-/$k^-$-} vertex/ neighbor/
face}{-7.5mm} is one with degree $k$. A \emph{$k^+$-vertex} (resp.~%
\emph{$k^-$-vertex}) is one with degree at least (resp.~at most) $k$.
A \emph{$k$-neighbor} 
is an adjacent $k$-vertex. A \emph{$k$-face} 
is one of length $k$. We define \emph{$k^+$-neighbor, $k^-$-neighbor,
$k^+$-face}, and \emph{$k^-$-face} analogously. A cycle $C$ is
\EmphE{separating}{0mm} if $G\setminus V(C)$ is disconnected. A vertex
$v\in V(G)$ is \EmphE{inner}{0mm} if $v$ does not lie on the outer face.
A plane graph $G$ is \emph{Type 1} if $\delta(G)\geq3$ and \emph{Type 2}\aside{Type 1, 2} if $\delta(G)\geq2$, the outer face $f_0$ is a 7-face, $V(f_0)\subsetneq V(G)$, and every 2-vertex of $G$ lies on $f_0$. 
(Note that Type 1 and Type 2 graphs are not necessarily mutually exclusive.
That is, a graph $G$ could satisfy the criteria to be both Type 1 and Type 2.
In that case, we will always define $G$ to be Type 2, unless we explicitly say
otherwise, since our conclusions for Type 2 graphs will be stronger.)
Let $T:=\emptyset$ if $G$ is Type 1 and $T:=V(f_0)$ if $G$ is Type 2. Let $V_1:=\{v\in V(G)\setminus T: d_G(v)\leq3\}$, $V_2:=\{v\in V(G)\setminus T: d_{G\setminus V_1}(v)\leq3\}$, $V_3:=\{v\in V(G)\setminus T: v\notin V_1\cup V_2\}$, and $V_4:=\{v\in T\}$. For all
$i\geq1$, each vertex $v\in V_i$ has \EmphE{level}{0mm} $i$.

Let $v$ be a 3-vertex with neighbors $v_1,v_2,v_3$, and assume that $v$ is
incident with 3 distinct faces.  For each $i\in[3]$, the
\EmphE{face opposite}{0mm} $v_i$ (with respect to $v$) is the face incident
with vertex $v$ that is not incident
with edge $vv_i$. A 3-vertex $v$ is \emph{good} \aaside{good
(neighbor)}{0mm} if it has a neighbor $w$ of level at most 2 whose opposite
face is a 4-face. We call $w$ a \emph{good neighbor} of $v$. 

Bousquet and Heinrich \cite{BousquetHeinrich}, in their proof for bipartite
planar graphs,  introduced the notion of the \emph{level of a vertex}. They
used it to
identify a pair of vertices in $G$ at distance 2 along a common face that could
be identified to proceed by induction.
We will apply a similar technique for planar graphs with no
3-cycles and no 5-cycles. However, identifying such vertices in these graphs
might create a 5-cycle, if the vertices lie on a 7-cycle (it will not create any 3-cycle
since $G$ has no 5-cycle).

To avoid this problem, we
show how to find good vertices inside a subgraph of our graph which does
not contain any separating 7-cycles; note that identifying a pair of vertices
in such a subgraph cannot create a 5-cycle. In particular, if our graph has
a separating 7-cycle, then we pick an innermost such cycle and, using
discharging, we show that we can use the extra charge from Euler's formula to
ensure we find a good vertex away from the outer face.  This technique has been
used previously; for example, see~\cite{DKT, DP}.
Our next lemma and subsequent corollary establish the
existence of good vertices inside separating 7-cycles.

\begin{key-lem}
\hypertarget{key-lemma}{}
Let $G$ be a connected plane graph with no 3-cycles, no 5-cycles, and no separating
7-cycles. If $G$ is Type 1, or $G$ is Type 2, then $G$ contains a good vertex $v$ with a good neighbor $w$ such that $v,w\notin T$.
\end{key-lem}

\begin{proof}
Let $\Vz:=V(f_0)$. The proofs for Type 1 graphs and Type 2 graphs are similar,
though for Type 2 graphs the proof is harder. Recall that if $G$ is Type 1,
then $T:=\emptyset$ and if $G$ is Type 2, then $T:=\Vz$. We use discharging
to show that $V(G)\setminus T$ contains some good vertex and its good neighbor. 
Assume instead that $G$ is a counterexample to the lemma. Denote by
\emph{$\ch(v)$} and \emph{$\ch(f)$} (resp.~\emph{$\ch^*(v)$} and
\emph{$\ch^*(f)$}) the initial (resp.~final) charges of each vertex $v$ and each face
$f$. Let $\ch(v):=d(v)-4$ for every $v\in V(G)$ and $\ch(f):=d(f)-4$ for every
$f\in F(G)$. Using Euler's formula, the total initial charge is $-8$:
\begin{align}
\sum_{v\in V(G)}(d(v)-4)+\sum_{f\in F(G)}(d(f)-4) &=\sum_{v\in
V(G)}d(v)-4|V(G)|+\sum_{f\in F(G)}d(f)-4|F(G)| \nonumber \\
&=2|E(G)|-4|V(G)|+2|E(G)|-4|F(G)| \nonumber \\ &=4(|E(G)|-|V(G)|-|F(G)|) \nonumber \\ &=-8.
\label{charge-sum}
\end{align}

Now we redistribute charge according to rules (R1) and (R2) below and show that
the total final charge is greater than $-8$, a contradiction.

\begin{itemize}
    \item[(R1):] Every $6^+$-face $f$ gives $\frac{d(f)-4}{d(f)}$ to every
incident vertex.\footnote{If a vertex $v$ appears multiple times on a boundary
walk of $f$, then $f$ gives $v$ charge $\frac{d(f)-4}{d(f)}$ for each such incidence.}
    \item[(R2):] Every 3-vertex not in $T$ takes $\frac{1}{3}$ from every
neighbor that has level at least 3.
\end{itemize}

Assume first that $G$ is Type 1. We show that 
$\ch^*(v)\ge 0$ and $\ch^*(f)\ge 0$ for all $v$ and all $f$.
Each 4-face $f$ loses no charge, so
$\ch^*(f)=4-4=0$. By (R1), each $6^+$-face $f$ has
$\ch^*(f)=(d(f)-4)-d(f)\left(\frac{d(f)-4}{d(f)}\right)=0$. Note that the only
vertices that receive charge from their neighbors are 3-vertices that are not in
$T$, by (R2). Therefore, each vertex $v$ of level 3 has $d(v)\geq4$ and at least
four neighbors that each receives no charge from $v$. So, (R2) implies $\ch^*(v)\geq
d(v)-4-\frac{1}{3}(d(v)-4)=\frac{2}{3}(d(v)-4)\geq0$. Moreover, each vertex $v$
of level 2 has $d(v)\geq4$ and loses no charge, so
$\ch^*(v)=\ch(v)=d(v)-4\geq0$. Finally, every 3-vertex $v$ has $\ch(v)=3-4=-1$,
so needs to receive at least 1. If $v$ is incident to at most 2 faces, then $v$ appears at least twice on the
boundary walk of an $8^+$-face, so $v$ receives at least $2(\frac{4}8)$ by (R1);
thus, $\ch^*(v)\ge 0$.  So assume that $v$ appears on 3 distinct faces. Let $M_v$ be the set of neighbors of $v$ of
level at least 3. Note, for every neighbor $x$ of $v$, that either (i) $x\in
M_v$ or (ii) the face 
opposite $x$ is a $6^+$-face; otherwise, $v$ is a good vertex with a good neighbor $x$, a
contradiction. Now $\ch^*(v)\geq -1+\frac{1}{3}|M_v|+\frac{1}{3}(3-|M_v|)=0$ by
(R1) and (R2). Thus, the lemma holds when $G$ is Type 1.

Now we assume that $G$ is Type 2. By the arguments above, $\ch^*(f)\ge 0$
for all $f$ and $\ch^*(v)\ge 0$ for every vertex $v$ of level 1, 2, or 3, i.e.,
for every $v\notin T$. It remains to show that $\sum_{v\in \Vz}\ch^*(v)>-8$. Let \Emph{$n_2$} be the
number of 2-vertices on $f_0$. Note that $n_2\leq6$; otherwise, since $G$ is
connected, $\Vz=V(G)$, a contradiction.

\begin{claim}
\label{deg2-pair}
If $w_1,\dots,w_m$ are 2-vertices on $f_0$ with $m\geq2$ and
$w_iw_{i+1}\in E(G)$ for all $i\in[m-1]$, then their incident face $f$ (other
than $f_0$) is a
$6^+$-face. Thus, each $w_i$, as well as the $3^+$-neighbors of $w_1$ and
$w_m$ on $f_0$, receives $\frac{d(f)-4}{d(f)}$ from $f$.
\end{claim}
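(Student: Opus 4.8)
The plan is to prove the Claim in three stages: first that $w_1,\dots,w_m$ all lie on a common face $f\neq f_0$, then that $d(f)\geq 6$, and finally to read off the charge statement from (R1).

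For the first stage, I would exploit that each $w_i$ has degree $2$: it lies on exactly one face $f^{(i)}$ besides $f_0$, and both edges at $w_i$ lie on $f^{(i)}$. Since $w_i$ and $w_{i+1}$ are consecutive on $f_0$, the edge $w_iw_{i+1}$ lies on $f_0$ and on exactly one further face, which must equal both $f^{(i)}$ and $f^{(i+1)}$; hence $f:=f^{(1)}=\dots=f^{(m)}$ is a single face, distinct from $f_0$. Let $w_0$ and $w_{m+1}$ be the neighbors of $w_1$ and $w_m$ other than $w_2$ and $w_{m-1}$ (these become the ``$3^+$-neighbors'' of the statement once the run is maximal). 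Since $w_1$ and $w_m$ have degree $2$, the edges $w_0w_1$ and $w_mw_{m+1}$ also lie on $f$, so the boundary walk of $f$ contains the path $w_0,w_1,\dots,w_m,w_{m+1}$.

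The second stage is the crux. I would first record two facts forced by the hypotheses: $G$ has no $3$-faces and no $5$-faces (a facial closed walk of length $3$ or $5$ contains an odd cycle of length at most $5$, impossible since $G$ has no $3$- or $5$-cycle), and --- by the same parity argument applied to the length-$7$ facial walk of $f_0$ --- the boundary of $f_0$ is a genuine $7$-cycle. In particular, since $m\leq n_2\leq 6$, the vertices $w_0,\dots,w_{m+1}$ are distinct whenever $m\leq 5$, so the path above contributes $m+1$ edges to the boundary of $f$. If $m\geq 3$ this path has at least five distinct vertices, and since a $4$-face is a $4$-cycle while a $5$-face is a $5$-cycle, we can rule out $d(f)\in\{4,5\}$, giving $d(f)\geq 6$. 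The genuinely delicate case is $m=2$, where the path $w_0w_1w_2w_3$ only yields $d(f)\geq 3$; here I would rule out $d(f)=4$ by noting that it would force $f$ to be the $4$-cycle $w_0w_1w_2w_3$, so $w_0w_3\in E(G)$ --- but $w_0,w_1,w_2,w_3$ are four consecutive vertices of the $7$-cycle $f_0$, so the chord $w_0w_3$ together with the complementary $w_3,w_0$-arc of $f_0$ (through its other three vertices) is a $5$-cycle, a contradiction. (The boundary value $m=6$ forces $w_0=w_{m+1}$ and hence a closed facial walk of length $7$ on $f$, so $d(f)\geq 7$; in fact it forces $V(G)=V(f_0)$, which is impossible.) Thus $d(f)\geq 6$ in every case.

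For the last stage, since $d(f)\geq 6$, rule (R1) sends $\frac{d(f)-4}{d(f)}$ from $f$ to each incident vertex; and $w_1,\dots,w_m$ all lie on the boundary of $f$, as do $w_0$ and $w_{m+1}$, which (when the run is maximal) are the $3^+$-neighbors of $w_1$ and $w_m$ on $f_0$. I expect the $m=2$ subcase to be the main obstacle: for longer runs, $d(f)\geq 6$ simply falls out of a vertex count against the short-odd-cycle hypothesis, whereas $m=2$ really needs the chord-creates-a-$5$-cycle observation; the one supporting technicality worth verifying carefully is that $f_0$ is a true $7$-cycle rather than merely a closed $7$-walk.
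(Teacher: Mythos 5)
Your proof is correct and uses essentially the same argument as the paper: the decisive step in both is that a 4-face at a pair of adjacent 2-vertices would force a chord between two vertices at distance 3 on the 7-cycle $f_0$, which together with the complementary arc yields a forbidden 5-cycle. The paper reduces to the case $m=2$ by observing that the two outer neighbors on a 4-face must have degree at least 3, whereas you dispose of $m\ge 3$ by counting distinct vertices on the boundary of $f$; this is only a cosmetic difference.
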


\begin{proof}[Proof of claim]
The second statement follows directly from the first via (R1). To see the first
statement, let $v_1$ and $v_2$ be the neighbors of
$w_{\lfloor{\frac{m}{2}\rfloor}}$ and $w_{\lfloor{\frac{m}{2}\rfloor}+1}$ on
$f_0$ (other than each other), respectively, and assume $f$ is a 4-face. This
means $f=G[\{v_1,w_1,w_2,v_2\}]$ and $m=2$ since $d(v_1),d(v_2)\geq3$. Denote by
$v_3,v_4,v_5$ the remaining vertices on $f_0$. Now $G[\{v_1,v_2,v_3,v_4,v_5\}]$ is
a 5-cycle, a contradiction.
\end{proof}

%
Recall that $\ch^*(f)\ge 0$ for all $f$ and $\ch^*(v)\ge 0$ for all $v\notin
\Vz$.  So, to reach a contradiction to~\ref{charge-sum}, it suffices to show
that $\sum_{v\in \Vz}\ch^*(v)>-8$.
Note that each vertex on $f_0$ takes
$\frac{d(f_0)-4}{d(f_0)}=\frac{3}{7}$ from $f_0$ by (R1). Moreover, by (R2),
every $3^+$-vertex $v$ on $f_0$ satisfies $\ch^*(v)\geq
(d(v)-4)-\frac{1}{3}(d(v)-2)\geq \frac{-4}{3}$. Finally, every 2-vertex $v$
satisfies $\ch^*(v)\geq 2-4=-2$. Let \Emph{$g(\Vz)$} be the charge gained via
(R1), from faces other than $f_0$, in total by all vertices on $f_0$. Now
$\sum_{v\in \Vz}\ch^*(v)\geq
-2n_2-\frac{4}{3}(7-n_2)+7(\frac{3}{7})+g(\Vz)=\frac{-2}{3}n_2-\frac{19}{3}+g(\Vz)$. 
Let $h(\Vz):=\frac{2}{3}n_2-\frac{5}{3}$.\aside{$h(\Vz)$} Now, it suffices to show that
$g(\Vz)>h(\Vz)$ since this implies $\sum_{v\in \Vz}\ch^*(v)>-8$. 

\begin{figure}[!h]
\begin{subfigure}{0.33\textwidth}
\centering
\begin{tikzpicture}[scale=1.5, every node/.style={scale=1}, rotate=360/28, xscale=-1]
\foreach \an [count=\i] in {0,360/7,720/7,1080/7,1440/7,1800/7,2160/7}
{\tkzDefPoint(\an:1){v_\i}}
\tkzDrawPolygon[thick](v_1,v_...,v_7)
\tkzDrawPoints[scale=1.5, fill=white](v_1,v_...,v_7)
\tkzLabelPoints[below](v_6,v_7)
\tkzLabelPoints[above](v_3)
\tkzLabelPoints[right](v_4,v_5)
\tkzLabelPoints[left](v_1,v_2)
\tkzDefPointBy[reflection = over v_2--v_7](v_1) \tkzGetPoint{w_1} 
\tkzDefPointBy[reflection = over v_2--v_4](v_3) \tkzGetPoint{w_3}
\tkzDefPointBy[reflection = over v_4--v_6](v_5) \tkzGetPoint{w_5}
\tkzDrawPoints[scale=1.5, fill=white](w_1,w_3,w_5)
\tkzLabelPoints[above](w_3)
\tkzLabelPoints[right](w_5)
\tkzLabelPoints[left](w_1)
\draw[thick] (v_2) -- (w_1) -- (v_7) (v_2) -- (w_3) -- (v_4) (v_4) -- (w_5) -- (v_6);
\tkzDrawPoints[scale=1.5, fill=white](w_1,w_3,w_5)
\tkzDrawPoints[scale=1.5, fill=white](v_1,v_...,v_7)
\end{tikzpicture}
\end{subfigure}%
\begin{subfigure}{0.33\textwidth}
\centering
\begin{tikzpicture}[scale=1.5, every node/.style={scale=1}, rotate=360/28, xscale=-1]
\foreach \an [count=\i] in {0,360/7,720/7,1080/7,1440/7,1800/7,2160/7}
{\tkzDefPoint(\an:1){v_\i}}
\tkzDrawPolygon[thick](v_1,v_...,v_7)
\tkzDrawPoints[scale=1.5, fill=white](v_1,v_...,v_7)
\tkzLabelPoints[below](v_6,v_7)
\tkzLabelPoints[above](v_3)
\tkzLabelPoints[right](v_4,v_5)
\tkzLabelPoints[left](v_1,v_2)
\tkzDefPointBy[reflection = over v_1--v_3](v_2) \tkzGetPoint{w_1} 
\tkzDefPointBy[reflection = over v_4--v_6](v_5) \tkzGetPoint{w_5}
\tkzDrawPoints[scale=1.5, fill=white](w_1,w_5)
\tkzLabelPoints[right](w_5)
\tkzLabelPoints[left](w_1)
\draw[thick] (v_2) -- (w_1) -- (v_7) (w_1) -- (v_4) (v_4) -- (w_5) -- (v_6);
\tkzDrawPoints[scale=1.5, fill=white](v_1,v_...,v_7)
\tkzDrawPoints[scale=1.5, fill=white](w_1,w_5)
\end{tikzpicture}
\end{subfigure}%
\begin{subfigure}{0.33\textwidth}
\centering
\begin{tikzpicture}[scale=1.5, every node/.style={scale=1}, rotate=360/28, xscale=-1]
\foreach \an [count=\i] in {0,360/7,720/7,1080/7,1440/7,1800/7,2160/7}
{\tkzDefPoint(\an:1){v_\i}}
\tkzDrawPolygon[thick](v_1,v_...,v_7)
\tkzDrawPoints[scale=1.5, fill=white](v_1,v_...,v_7)
\tkzLabelPoints[below](v_6,v_7)
\tkzLabelPoints[above](v_3)
\tkzLabelPoints[right](v_4,v_5)
\tkzLabelPoints[left](v_1,v_2)
\tkzDefPointBy[reflection = over v_2--v_4](v_3) \tkzGetPoint{w_3}
\tkzDefShiftPoint[w_3](-80:0.5){w_1}
\tkzDrawPoints[scale=1.5, fill=white](w_1,w_3)
\tkzLabelPoints[above](w_3)
\tkzLabelPoints[left](w_1)
\draw[thick] (v_2) -- (w_1) -- (v_7) (v_2) -- (w_3) -- (v_4) (v_4) -- (w_1) -- (v_6);
\tkzDrawPoints[scale=1.5, fill=white](v_1,v_...,v_7)
\tkzDrawPoints[scale=1.5, fill=white](w_1,w_3)
\end{tikzpicture}
\end{subfigure}
\caption{The three instances of Case 2, in the proof of the
Key Lemma
when no 2-vertices on $f_0$ are adjacent. Left:
The nonneighbors of $v_1$, $v_3$, and $v_5$ on their incident 4-faces are
distinct. Center: The nonneighbors of $v_1$ and $v_3$ on their incident 4-faces
are the same, i.e., $w_1=w_3$. Right: The nonneighbors of $v_1$ and $v_5$ on
their incident 4-faces are the same, i.e., $w_1=w_5$.}
\label{case2}
\end{figure}
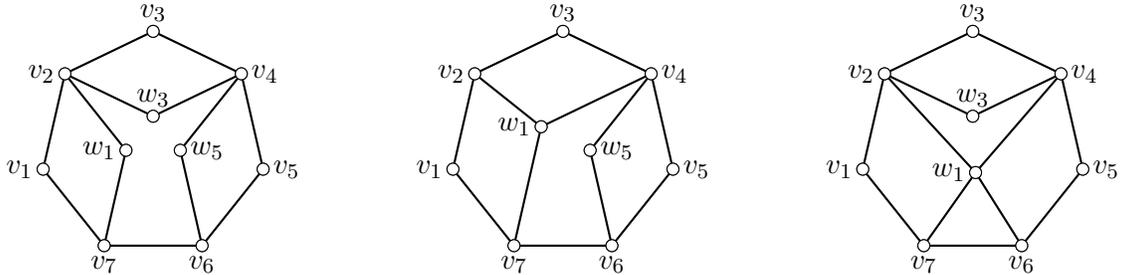

\textbf{Case 1: $\bm{n_2\leq2}$.} Now $h(\Vz)\leq\frac{-1}{3}$, so $g(\Vz)\ge
0>h(\Vz)$.

\textbf{Case 2: $\bm{n_2=3}$.} If $f_0$ has at least two adjacent 2-vertices,
then $g(\Vz)\geq\frac{1}{3}(4)>\frac{1}{3}=h(\Vz)$, by
Claim~\ref{deg2-pair}. So, we assume no pair of 2-vertices are adjacent on
$f_0$. Let $v_1,\dots,v_7$ be the vertices on $f_0$ in cyclic order. By
symmetry, assume $v_1$, $v_3$, and $v_5$ are 2-vertices. Note that at least one
2-vertex is incident with a $6^+$-face. To see this, assume every 2-vertex is
incident with a 4-face. Let $w_1$, $w_3$, and $w_5$ be the inner vertices incident
with the 4-faces of $v_1$, $v_3$, and $v_5$, respectively; see
Figure~\ref{case2}.  (Note that the boundary cycle of $f_0$ cannot have a chord,
since any chord would create either a 3-cycle or a 5-cycle, both of which are
forbidden by hypothesis.)
Observe that $w_1$, $w_3$, and $w_5$ are distinct.
Otherwise, by symmetry, either $w_1=w_3$ or $w_1=w_5$.  In the former case
$G[\{w_1,v_4,v_5,v_6,v_7\}]$ is a 5-cycle, and in the latter $G[\{w_1,v_6,v_7\}]$
is a 3-cycle, both of which are contradictions. But now
$G[\{w_1,v_2,w_3,v_4,v_5,v_6,v_7\}]$ is a 7-cycle that separates $v_3$ from $w_5$, a
contradiction.
Hence, by symmetry, $v_1$ is incident with a $6^+$-face $f$, 
so $v_1$, $v_2$, and $v_7$ each get at least $\frac{1}{3}$ from $f$ by (R1).
Thus, $g(\Vz)\geq\frac{1}{3}(3)>\frac{1}{3}=h(\Vz)$.

\textbf{Case 3: $\bm{n_2=4}$.} By Pigeonhole, at least two 2-vertices are
adjacent on $f_0$. So, $g(\Vz)\geq\frac{1}{3}(4)>1=h(\Vz)$ by Claim~\ref{deg2-pair}.

\textbf{Case 4: $\bm{n_2=5}$.} By Pigeonhole, either (a) at least four
2-vertices on $f_0$ induce a path, or (b) three 2-vertices on $f_0$ induce a
path $P_1$ and the other two 2-vertices on $f_0$ induce another path $P_2$, and
no vertex on $P_1$ is adjacent to a vertex on $P_2$. In both cases, at least 6
vertices on $f_0$ get at least $\frac{1}{3}$ via (R1) from faces other than
$f_0$, by Claim~\ref{deg2-pair}. So, $g(\Vz)\geq\frac{1}{3}(6)>\frac{5}{3}=h(\Vz)$.

\textbf{Case 5: $\bm{n_2=6}$.} Now the face $f$ incident to all 2-vertices
(other than $f_0$) is a $7^+$-face. So, each of the six 2-vertices (as well as
the single $3^+$-vertex) on $f_0$ gets at least $\frac{3}{7}$ by
Claim~\ref{deg2-pair}.
Thus, $g(\Vz)\geq\frac{3}{7}(7)>\frac{7}{3}=h(\Vz)$. 
\end{proof}


\begin{cor}
\label{goodvertex}
If $G$ is a plane graph with no 3-cycles, with no 5-cycles, and with
$\delta(G)\ge 3$, then $G$
contains a good vertex $v$. Furthermore, identifying $v$ and its nonneighbor on
an incident 4-face results in a smaller plane graph with no 3-cycles and no 5-cycles.
\end{cor}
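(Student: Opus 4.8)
We may assume $G$ is connected; otherwise apply the argument to one component, noting that degrees, levels, incident faces, and short cycles of a component agree with those of its image in $G$, and the identification affects only that component. If $G$ has no separating $7$-cycle, then $G$ is Type~1 (it is connected with $\delta(G)\ge 3$), so the Key Lemma directly supplies a good vertex $v$ with a good neighbor $w$. If $G$ has a separating $7$-cycle, let $C$ be an innermost one, and let $H$ be the plane graph induced by $C$ together with the vertices enclosed on one side of $C$ (restricting to one component of that side if it is disconnected), drawn with $C$ as its outer face $f_0$. I would verify that $H$ is Type~2: $C$ has no chord (a chord would create a forbidden $3$- or $5$-cycle), so $f_0$ is a $7$-face; every interior vertex keeps all of its $G$-neighbors, so $\delta(H)\ge 2$ and every $2$-vertex of $H$ lies on $f_0$; and $V(f_0)\subsetneq V(H)$ since the enclosed side is nonempty. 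I would also check that $H$ has no separating $7$-cycle: any such cycle, being distinct from $C$ (as $H\setminus V(C)$ is connected by construction), lies strictly inside $C$ and would therefore be a separating $7$-cycle of $G$ strictly inside $C$, contradicting the choice of $C$. The Key Lemma applied to $H$ then yields a good vertex $v$ of $H$ with a good neighbor $w$, with $v,w\notin T=V(C)$, i.e.\ $v$ and $w$ lie strictly inside $C$.

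Next I would transfer goodness to $G$. Since $v$ lies strictly inside $C$, it has the same degree, the same neighborhood, and the same incident faces in $G$ as in $H$; in particular the $4$-face of $H$ opposite $w$ is a $4$-face of $G$. It remains to show $w$ has level at most $2$ in $G$. For this, $V_1(H)\subseteq V_1(G)$ (a vertex of $V_1(H)$ lies strictly inside $C$, so its $G$-degree equals its $H$-degree, which is at most $3$), and then $V_2(H)\subseteq V_1(G)\cup V_2(G)$ (for $x\in V_2(H)$ with $d_G(x)\ge 4$ we have $N_G(x)=N_H(x)$, hence $d_{G\setminus V_1(G)}(x)=|N_H(x)\setminus V_1(G)|\le|N_H(x)\setminus V_1(H)|=d_{H\setminus V_1(H)}(x)\le 3$). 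So $w$ has level at most $2$ in $G$, and $v$ is a good vertex of $G$.

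Finally I would prove the ``furthermore'' statement. Let $f=vv_2uv_3$ be the $4$-face opposite the good neighbor $w$, let $u$ be the nonneighbor of $v$ on $f$, and let $G'$ be $G$ with $v$ and $u$ identified (suppressing multiple edges). Then $|V(G')|=|V(G)|-1$, and $G'$ is plane because $v$ and $u$ both lie on $f$. Any $3$-cycle or $5$-cycle of $G'$ either already lies in $G$ or arises by closing up a $v,u$-path $P$ in $G$ of length $3$ or $5$, so it suffices to show no such $P$ exists. If $P$ starts at $v$ with $vv_2$ or $vv_3$, then, since $uv_2,uv_3\in E(G)$, the path $P-v$ together with the edge $v_2u$ (resp.\ $v_3u$) is a cycle of $G$ of the same length as $P$ — a triangle if $|P|=3$ and a $5$-cycle if $|P|=5$, both forbidden. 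So $P$ starts with $vw$. If $|P|=3$, say $P=vwbu$, then $b\ne v_2$ (else the triangle $vwv_2$), so $vwbuv_2v$ is a $5$-cycle of $G$ — impossible; thus $G'$ has no triangle. If $|P|=5$, say $P=vwp_2p_3p_4u$, a short check (any coincidence $v_2\in V(P)$ forces a triangle or $5$-cycle) shows $C^*:=P\cup\{vv_2,v_2u\}$ is a genuine $7$-cycle of $G$. Since $v$ is a $3$-vertex incident with three distinct faces and $C^*$ uses exactly the edges $vw$ and $vv_2$ at $v$, the third neighbor $v_3$ (which one checks is not on $C^*$) lies strictly on the side of $C^*$ containing the edge $vv_3$, while the face of $v$ in the angle between $vw$ and $vv_2$ lies strictly on the other side. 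As that face is triangle-free and not a $5$-face, it should contribute a vertex not on $C^*$ (otherwise one of its edges is a chord of the $7$-cycle $C^*$, giving a forbidden $3$- or $5$-cycle), so $C^*$ separates $G$ — contradicting the hypothesis in the first case, and, after confirming $C^*$ lies inside $C$, contradicting the choice of $C$ in the second. Hence $G'$ has no $5$-cycle.

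The main obstacle is this last step: deducing a \emph{separating} $7$-cycle from a newly created $5$-cycle. The local analysis at the $3$-vertex $v$ must put a vertex on each side of $C^*$, which requires ruling out the degenerate case that the face of $v$ opposite $v_3$ is itself a $7$-face bounded by $C^*$ (here I expect to use $d(w)\ge 3$, or the constraint $\delta(H)\ge 2$ built into Type~2, to exclude this), together with the bookkeeping in the separating case that certifies $C^*$ lies inside the innermost separating $7$-cycle $C$.
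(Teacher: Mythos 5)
Your overall architecture matches the paper's: reduce to a connected graph, invoke the Key Lemma for Type~1 graphs when there is no separating $7$-cycle, and otherwise pass to an innermost separating $7$-cycle $C$ and apply the Type~2 case of the Key Lemma to the subgraph $H$ induced by $C$ and its interior. Your transfer of levels from $H$ to $G$ is correct and more explicit than the paper's, and your reduction of a new $3$- or $5$-cycle of $G'$ to a $v,u$-path $P$, the elimination of paths beginning with $vv_2$ or $vv_3$, and the construction of the $7$-cycle $C^*=P\cup\{vv_2,v_2u\}$ all check out. The degenerate case you flag (the face of $v$ between $vw$ and $vv_2$ being bounded by $C^*$ itself) is real but closable: that face and the $4$-face $vv_2uv_3$ would then both contain the path $vv_2u$ on their boundaries, forcing $d(v_2)=2$ and contradicting $\delta(G)\ge 3$. (The paper instead argues at the degree-$\ge 3$ common neighbor --- its $v_1$, your $v_2$ --- whose third neighbor $y$ and the interior of the $4$-face lie on opposite sides of the $7$-cycle.)

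The genuine gap is in the separating-$7$-cycle case, where you defer to ``confirming $C^*$ lies inside $C$.'' This cannot be confirmed, because it need not hold a priori. Only $v$, $w$ and their neighbors are guaranteed to lie in $H$: since $u$ and $p_2$ may lie on $C$ itself, the vertices $p_3$ and $p_4$ may lie strictly outside $C$. If $C^*$ exits the closed disk bounded by $C$, it is neither a separating $7$-cycle of $H$ nor a separating $7$-cycle of $G$ nested inside $C$, so you obtain no contradiction with the choice of $C$ or with the Key Lemma's hypotheses. Handling exactly this situation is the bulk of the paper's proof of the corollary: it first shows that only the two vertices $x_2,x_3$ (your $p_3,p_4$) can lie outside $C$, and then, using that $C$ has no chords and $H$ has no separating $7$-cycles, replaces the excursion outside $C$ by the short arc of $C$ between its exit and reentry points; the resulting $7$-cycle of $H$ must be facial and shares the path $vv_1w$ with the $4$-face, which forces $v_1$ (your $v_2$) to be an inner $2$-vertex of $H$, a contradiction. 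Without some substitute for this exit-and-reentry analysis, your proof of the ``furthermore'' statement is incomplete in precisely the case that required introducing separating $7$-cycles in the first place.
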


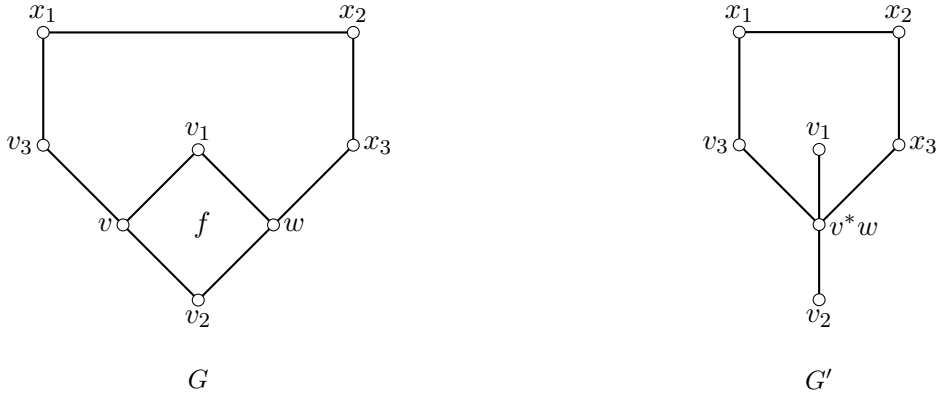
\begin{figure}[!h]
\begin{subfigure}{0.5\textwidth}
\centering
\begin{tikzpicture}[xscale=-1]
\tkzDefPoint(0:1){v}
\tkzDefPoint(90:1){v_1}
\tkzDefPoint(180:1){w}
\tkzDefPoint(270:1){v_2}
\tkzDefShiftPoint[v](45:1.5){v_3}
\tkzDefShiftPoint[v_3](90:1.5){x_1}
\tkzDefShiftPoint[w](135:1.5){x_3}
\tkzDefShiftPoint[x_3](90:1.5){x_2}
\tkzDrawPolygon[thick](v,v_1,w,v_2)
\draw[thick] (v) -- (v_3) -- (x_1) -- (x_2) -- (x_3) -- (w);
\tkzDrawPoints[scale=1.5, fill=white](v,v_1,v_2,v_3,w,x_1,x_2,x_3)
\tkzLabelPoints[below](v_2)
\tkzLabelPoints[left](v,v_3)
\tkzLabelPoints[above](v_1,x_1,x_2)
\tkzLabelPoints[right](w,x_3)
\tkzDefPoint(180:-.2){f}
\tkzLabelPoints[right](f)
\tkzDefShiftPoint[v](-.7,-1.75){G}
\draw (v) ++ (-1,-2.05) node[draw=none] {\small{$G$}};
\end{tikzpicture}
\end{subfigure}%
\begin{subfigure}{0.5\textwidth}
\centering
\begin{tikzpicture}[xscale=-1]
\tkzDefPoint(0:1){\vw}
\tkzDefShiftPoint[\vw](90:1){v_1}
\tkzDefShiftPoint[\vw](-90:1){v_2}
\tkzDefShiftPoint[\vw](45:1.5){v_3}
\tkzDefShiftPoint[v_3](90:1.5){x_1}
\tkzDefShiftPoint[\vw](135:1.5){x_3}
\tkzDefShiftPoint[x_3](90:1.5){x_2}
\draw[thick] (v_1) -- (\vw) -- (v_2) (\vw) -- (v_3) -- (x_1) -- (x_2) -- (x_3) -- cycle;
\tkzDrawPoints[scale=1.5, fill=white](\vw,v_1,v_2,v_3,x_1,x_2,x_3)
\tkzLabelPoints[below](v_2)
\tkzLabelPoints[left](v_3)
\tkzLabelPoints[above](v_1,x_1,x_2)
\tkzLabelPoints[right](\vw,x_3)
\draw (\vw) ++ (0,-2.05) node[draw=none] {\small{$G'$}};
\end{tikzpicture}
\end{subfigure}
\caption{Now $vv_3x_1x_2x_3wv_1$ is a separating 7-cycle in $G$ if
$(\vw)v_3x_1x_2x_3$ is a 5-cycle in $G'$.}
\label{corollary-fig}
\end{figure}
\begin{proof}
Assume the corollary is false and $G$ is a smallest counterexample.
Now $G$ is connected; otherwise, the result holds, by the minimality of $G$, for
each component of $G$. Suppose first that $G$ has no separating 7-cycles. By
the \hyperlink{key-lemma}{Key Lemma} for Type 1 graphs\footnote{Although $G$ may
also satisfy the criteria to be a Type 2 graph, by hypothesis it satisfies the
criteria to be a Type~1 graph, so we invoke the \hyperlink{key-lemma}{Key Lemma}
for Type 1 graphs.}, $G$ contains a good vertex $v$. Let
$v_1,v_2,v_3$ be the neighbors of $v$ with $v_3$ being a good neighbor of $v$
and $f$ being the 4-face opposite of $v_3$. Let $w$ be the nonneighbor of $v$
on $f$; see Figure~\ref{corollary-fig}. Form $G'$ from $G$ by identifying $v$ with $w$ to
form a new vertex 
$\vw$. Since $v$ and $w$ lie on a 4-face, $G'$ is a plane graph. If $G'$ has a
3-cycle, then $G$ has a 5-cycle, a contradiction. 
Suppose $G'$ has a 5-cycle, $(\vw)v_3x_1x_2x_3$; call it $C'$.
Since $d_G(v_1)\ge 3$, we note that $v_1$ has a neighbor $y$ in $G$ (other than
$v$ and $w$) and in $G'$ we know $y$ lies in the interior of $C'$.
Thus, in $G$ the 
7-cycle $vv_3x_1x_2x_3wv_1$ separates $y$ from $v_2$, again a
contradiction.
Hence, $G'$ has no 3-cycles and no 5-cycles. So, the corollary holds when $G$
has no separating 7-cycles.

\def\Cin{C_{\textrm{in}}}
Suppose instead that $G$ has a separating $7$-cycle and let $C$ be an innermost
such cycle. Denote by \Emph{$\Cin$} the subgraph induced by vertices that
lie on $C$ and inside $C$. 
Observe that $\Cin$ is a plane graph with no $3$-cycles, no $5$-cycles, and no
separating $7$-cycles. Further, $\delta(\Cin) \geq 2$ with every $2$-vertex of $\Cin$ lying on the
outer face of $\Cin$, which is induced by $V(C)$. 
By the \hyperlink{key-lemma}{Key Lemma}, 
$\Cin$ contains a good
vertex $v$. Define $v_1$, $v_2$, $v_3$, and $w$ as in the previous paragraph.
Note that $v,v_3\notin V(C)$, 
by the \hyperlink{key-lemma}{Key Lemma}. 
So, $d_G(v)=3$ and
$v_3$ has level at most 2 in $G$, i.e., $v$ is a good vertex in $G$. 
We note that $v_1$ and $v_2$ cannot both lie on $C$.  Suppose the contrary.
Since $C$ has length 7, some $v_1,v_2$-path $P$ in $C$ has odd length at most 5.
If $P$ has length 1 or 3, then $Pv_1vv_2$ is a 3-cycle or 5-cycle, a
contradiction.  So assume $P$ has length 5.  Now $Pv_1vv_2$ is a 7-cycle in
$\Cin$ that separates $v_3$ from $w$, a contradiction.  So assume by symmetry
that $v_1$ does not lie on $C$.

Form $G'$ from $G$ by identifying $v$ with $w$ to form a new vertex 
$\vw$. Clearly $G'$ is a plane graph with no $3$-cycles. Assume $G'$ has a
$5$-cycle. So $G$ has a $(v, w)$-path $P$ of length five. 
It is easy to check that $v_{1}$ is not an interior vertex of $P$. 
Let $Q:=Pvv_1w$, and denote its vertices by $v$, $v_3$, $x_1$, $x_2$, $x_3$,
$x_4$, $v_1$ (in order), where $x_4=w$.  See Figure~\ref{corollary-fig2}.
Since $d(v_1)\ge 3$, let $y$ be a neighbor of $v_1$
other than $v$ and $w$.  Note that $y$ is not on $Q$, since this would give a
7-cycle with a chord, and thus a 3-cycle or a 5-cycle in $G$, a contradiction.
So $Q$ separates $v_2$ from $y$.  Since $\Cin$ has no separating cycles, $Q$
must not lie entirely in $\Cin$.  Since $v$, $v_1$, and $v_3$ are inner vertices
of $\Cin$, the vertices of $Q$ outside $C$ are contained in $\{x_2,x_3\}$.

\begin{figure}[!h]
\begin{subfigure}{0.5\textwidth}
\centering
\begin{tikzpicture}[xscale=-1, bezier bounding box]
\tkzDefPoint(0:1){v}
\tkzDefPoint(90:1){v_1}
\tkzDefPoint(180:1){x_4}
\tkzDefPoint(270:1){v_2}
\tkzDefShiftPoint[v](45:1.5){v_3}
\tkzDefShiftPoint[v_3](90:1.5){x_1}
\tkzDefShiftPoint[x_4](135:1.5){x_3}
\tkzDefShiftPoint[x_3](90:1.5){x_2}
\tkzDefMidPoint(x_1,x_3) \tkzGetPoint{z} 
\tkzDrawPolygon[thick](v,v_1,x_4,v_2)
\draw[thick] (v) -- (v_3) -- (x_1) -- (x_2) -- (x_3) -- (x_4) (x_1) -- (z) -- (x_3);
\draw[thick] (x_1) edge[bend left=120, looseness=3.5, decorate, decoration={snake, amplitude=0.7mm}] (x_3);
\tkzDrawPoints[scale=1.5, fill=white](v,v_1,v_2,v_3,x_4,x_1,x_2,x_3,z)
\tkzLabelPoints[below](v_2)
\tkzLabelPoints[left](v,v_3)
\tkzLabelPoints[above](v_1,z,x_1,x_2)
\tkzLabelPoints[right](x_4,x_3)
\tkzDefPoint(180:-.2){f}
\tkzLabelPoints[right](f)
\tkzDefShiftPoint[v](315:2.2){C} \tkzLabelPoints[below](C)
\end{tikzpicture}
\end{subfigure}%
\begin{subfigure}{0.5\textwidth}
\centering
\begin{tikzpicture}[xscale=-1, bezier bounding box]
\tkzDefPoint(0:1){v}
\tkzDefPoint(90:1){v_1}
\tkzDefPoint(180:1){x_4}
\tkzDefPoint(270:1){v_2}
\tkzDefShiftPoint[v](45:1.5){v_3}
\tkzDefShiftPoint[v_3](90:1.5){x_1}
\tkzDefShiftPoint[x_4](135:1.5){x_3}
\tkzDefShiftPoint[x_3](90:1.5){x_2}
\tkzDefMidPoint(x_1,x_3) \tkzGetPoint{q} 
\tkzDefShiftPoint[q](210:1){p}
\tkzDrawPolygon[thick](v,v_1,x_4,v_2)
\draw[thick] (v) -- (v_3) -- (x_1) -- (x_2) -- (x_3) -- (x_4) (x_1) -- (q) -- (p) -- (x_4);
\draw[thick] (x_1) edge[bend left=130, looseness=3.5, decorate, decoration={snake, amplitude=0.7mm}] (x_4);
\tkzDrawPoints[scale=1.5, fill=white](v,v_1,v_2,v_3,x_4,x_1,x_2,x_3,p,q)
\tkzLabelPoints[below](v_2)
\tkzLabelPoints[left](v,v_3)
\tkzLabelPoints[above](q,v_1,x_1,x_2)
\tkzLabelPoints[right](x_4,x_3,p)
\tkzDefPoint(180:-.2){f}
\tkzLabelPoints[right](f)
\tkzDefShiftPoint[v](315:2.1){C} \tkzLabelPoints[below](C)
\end{tikzpicture}
\end{subfigure}
\caption{Two instances where the 7-cycle $Q$ does not lie entirely in $C$. Left: Here $x_2$ lies outside $C$ and $z$ is not part of $Q$. Right: Here $x_2$ and $x_3$ both lie outside $C$, and both $p$ and $q$ are not part of $Q$.}
\label{corollary-fig2}
\end{figure}
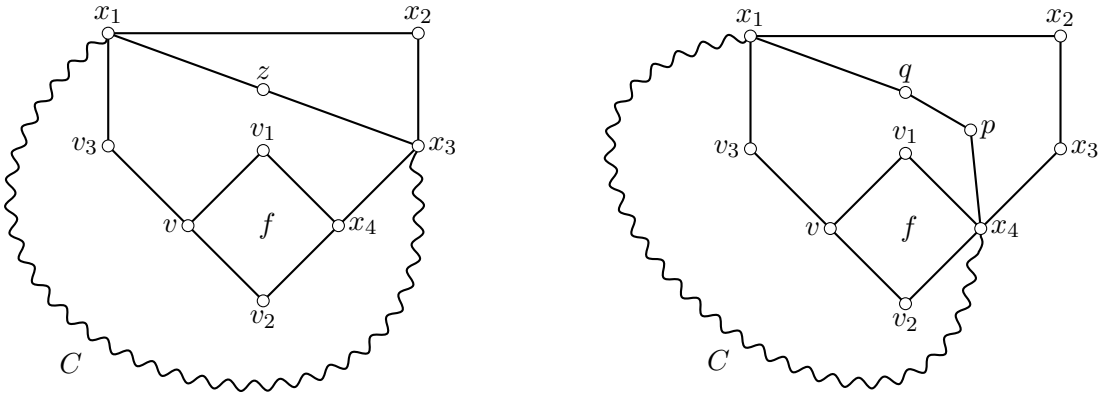

Suppose that $Q$ has exactly one vertex $x_i$ outside $C$. The
path $x_{i-1}x_ix_{i+1}$ together with the shorter path in $C$ between
$x_{i-1}$ and $x_{i+1}$ forms a cycle of length at most $5$. Thus, this must be
a $4$-cycle $x_{i-1}x_{i}x_{i+1}z$. If $z$ is not on $Q$, then $(Q\setminus
\{x_{i}\}) \cup x_{i-1}zx_{i+1}$ is a $7$-cycle of $\Cin$, which must be facial,
since $\Cin$ has no separating 7-cycles; see the left of  Figure~\ref{corollary-fig2}.  But now the path $vv_1w$ lies on both a 7-face and
4-face in $\Cin$, so $v_1$ is an inner 2-vertex of $\Cin$, a contradiction.
So instead $z$ must lie on $Q$.  But now either $x_{i-1}z$ or $x_{i+1}z$ must be
a chord of $Q$, a contradiction.

So assume instead that $Q$ has exactly two vertices outside $C$, and these are
$x_2$ and $x_3$.  Similar to above, the path $x_1x_2x_3x_4$ together with the
shorter path on $C$ between $x_1$ and $x_4$ forms a 4-cycle or $6$-cycle.
If this cycle has length $4$, then $x_1x_4$ is a chord of $Q$, a
contradiction. So the cycle must have length 6; we denote its vertices (in
order) by $x_1x_2x_3x_4pq$.  Note that neither $p$ nor $q$ is on $Q$,
since $p$ and $q$ lie on $C$, but all vertices of $Q$ other than $x_1, x_2, x_3, x_4$ are inner vertices of $\Cin$.
Thus, $(Q
\setminus \{x_2, x_3\}) \cup x_4pqx_1$ is a $7$-cycle in $\Cin$ as on the right in Figure~\ref{corollary-fig2}; again it must
be a facial 7-cycle.  But now the path $vv_1w$ lies on both a 7-face and a
4-face, so $v_1$ is an inner 2-vertex of $\Cin$, a contradiction.
\end{proof}

The proof of our \hyperlink{main-thm}{Main Theorem} (restated below) is now similar to a proof of
Bousquet and Heinrich \cite{BousquetHeinrich} who showed the same conclusion
for the smaller class of all bipartite planar graphs. 

\begin{mainthm}
If $G$ is a plane graph with no 3-cycles and no 5-cycles, then
$\diam(\mathcal{C}_5(G))\le 4n^2$. 
\end{mainthm}

\begin{proof}
Let $G$ be a plane graph with no 3-cycles and no 5-cycles. Let $\vph_A$ and
$\vph_B$ be two 5-colorings of $G$.  (Recall that $n:=|V(G)|$.)  We will show
that we can transform $\vph_A$ into $\vph_B$ by recoloring each vertex 
at most $4n$ times. We use induction on $n$.

\textbf{Case 1: $\bm{G}$ contains a vertex $\bm{v}$ such that
$\bm{d(v)\leq2}$.} 
Assume $n>1$; otherwise, the result follows trivially.
Let $\vph'_A$ and $\vph'_B$ be the restrictions of $\vph_A$ and $\vph_B$
to $G-v$. By induction, there exists a sequence $\mathcal{S}'$ of recolorings
that transforms $\vph'_A$ into $\vph'_B$ such that each vertex is recolored at
most $4(n-1)$ times. We extend $\mathcal{S}'$ to a sequence $\mathcal{S}$ of
recolorings in $G$.  To form $\mathcal{S}$ in $G$, we can perform each
recoloring step from $\mathcal{S}'$,
except when a neighbor $w$ of $v$ is to be recolored with the current
color $\alpha$ of $v$. In that case, we need to recolor $v$ before recoloring
its neighbor $w$. 
The number of colors unused on $N(v)\cup\{v\}$ is at least $5-(1+d(v))\ge 2$.
We recolor $v$ with one of these colors that is not the target color in the
next recoloring of a neighbor of $v$.  So, we only need to recolor $v$ at most once
for every two successive recolorings of neighbors of $v$. Finally, we may need
to recolor $v$ to its target color in $\vph_B$. Since $d(v)\leq2$ and each
neighbor of $v$ is recolored at most $4(n-1)$ times in $\mathcal{S}'$, the total
number of times we recolor $v$ is at most $\frac{2(4(n-1))}{2}+1<4n$.

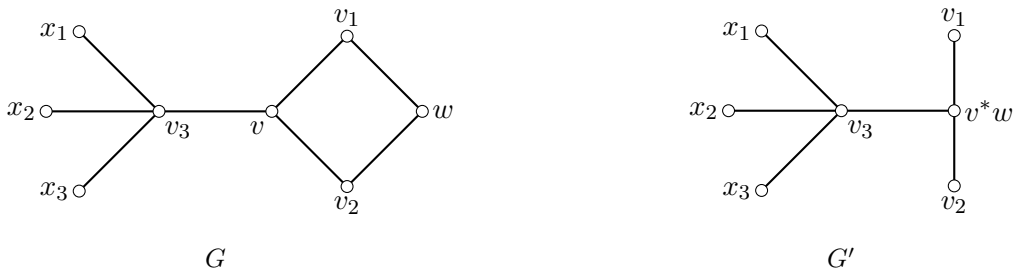
\begin{figure}[!b]
\begin{subfigure}{0.5\textwidth}
\centering
\begin{tikzpicture}[xscale=-1]
\tkzDefPoint(0:1){v}
\tkzDefPoint(90:1){v_1}
\tkzDefPoint(180:1){w}
\tkzDefPoint(270:1){v_2}
\tkzDefShiftPoint[v](0:1.5){v_3}
\tkzDefShiftPoint[v_3](0:1.5){x_2}
\tkzDefShiftPoint[v_3](45:1.5){x_1}
\tkzDefShiftPoint[v_3](-45:1.5){x_3}
\tkzDrawPolygon[thick](v,v_1,w,v_2)
\draw[thick] (v) -- (v_3) -- (x_1) (x_2) -- (v_3) -- (x_3);
\tkzDrawPoints[scale=1.5, fill=white](v,v_1,v_2,v_3,w,x_1,x_2,x_3)
\tkzLabelPoints[below](v_2)
\tkzLabelPoint[below](v_3){$~~~~v_3$}
\tkzLabelPoint[below](v){$v~~~$}
\tkzLabelPoints[left](x_1,x_2,x_3)
\tkzLabelPoints[above](v_1)
\tkzLabelPoints[right](w)

\draw (v_3) ++ (-.75,-1.95) node[draw=none] {\small{$G$}};
\end{tikzpicture}
\end{subfigure}%
\begin{subfigure}{0.5\textwidth}
\centering
\begin{tikzpicture}[xscale=-1]
\tkzDefPoint(0:1){\vw}
\tkzDefShiftPoint[\vw](90:1){v_1}
\tkzDefShiftPoint[\vw](-90:1){v_2}
\tkzDefShiftPoint[\vw](0:1.5){v_3}
\tkzDefShiftPoint[v_3](0:1.5){x_2}
\tkzDefShiftPoint[v_3](45:1.5){x_1}
\tkzDefShiftPoint[v_3](-45:1.5){x_3}
\draw[thick] (v_1) -- (\vw) -- (v_2) (\vw) -- (v_3) -- (x_1) (x_2) -- (v_3) -- (x_3);
\tkzDrawPoints[scale=1.5, fill=white](\vw,v_1,v_2,v_3,x_1,x_2,x_3)
\tkzLabelPoints[below](v_2)
\tkzLabelPoints[left](x_1,x_2,x_3)
\tkzLabelPoints[above](v_1)
\tkzLabelPoints[right](\vw)
\tkzLabelPoint[below](v_3){$~~~~v_3$}
\draw (v_3) ++ (0,-1.95) node[draw=none] {\small{$G'$}};
\end{tikzpicture}
\end{subfigure}
\caption{An instance of Case 2 in the proof of the Main Theorem.}
\label{identify}
\end{figure}
\textbf{Case 2: 
$\bm{\delta(G)\geq3}$.} 
Assume $n>1$; otherwise, the result follows trivially.
By Corollary~\ref{goodvertex}, $G$ contains a good
vertex $v$.\aside{$v$, $w$} Moreover, if \EmphE{$v_1$, $v_2$, $v_3$}{4mm} 
are the neighbors of $v$, where $v_3$ is
a good neighbor, and $w$ is the nonneighbor of $v$ on the 4-face opposite
$v_3$, then $G'$, which is formed from $G$ by identifying $v$ with $w$ into a
vertex $\vw$, is a planar graph with no 3-cycles and no 5-cycles. 

Note that if $\vph_A(v)\neq\vph_A(w)$, then we can transform $\vph_A$ into a
coloring $\vph'_A$ such that $\vph'_A(v)=\vph'_A(w)$ and every vertex is
recolored by this transformation at most twice. To see this, let
$\alpha:=\vph_A(w)$. Note that $\vph_A(v_1)\neq\alpha$ and
$\vph_A(v_2)\neq\alpha$ since $v_1,v_2\in N(w)$. If $\vph_A(v_3)\neq\alpha$,
then we recolor $v$ with $\alpha$ and we are done. So, assume
$\vph_A(v_3)=\alpha$. Recall that $v_3$ has level at most 2, so $v_3$ has at
most 3
neighbors of degree greater than 3. 
Fix $x_1,x_2,x_3\in N(v_3)$ such that each $4^+$-neighbor of $v_3$ is among
$\{x_1,x_2,x_3\}$.  Now there exists a color
$\beta\notin\{\alpha,\vph_A(x_1),\vph_A(x_2),\vph_A(x_3)\}$; see
Figure~\ref{identify}. If need be, we first recolor every $\beta$-colored
3-neighbor of $v_3$ (of which there may be arbitrarily many);
this is possible since at least one color does not appear
on the closed neighborhood of each 3-vertex. Observe that $v$ might possibly
be recolored, but $w$ is not recolored since $\vph_A(w)=\alpha\neq\beta$. We
now recolor $v_3$ with $\beta$, recolor $v$ with $\alpha$, and are done (since
$w$ and $v$ now both have color $\alpha$, we can identify them, to form a new
vertex).

Similarly, if $\vph_B(v)\neq\vph_B(w)$, then we can transform
$\vph_B$ into a coloring $\vph'_B$ such that $\vph'_B(v)=\vph'_B(w)$. Note that
$\vph'_A$ and $\vph'_B$ are proper 5-colorings of $G'$. By induction, there
exists a sequence $\mathcal{S}'$ that transforms $\vph'_A$ into $\vph'_B$ in
$G'$ such that each vertex is recolored at most $4(n-1)$ times. It is easy to
see that $\mathcal{S}'$ extends to a sequence $\mathcal{S}$ in $G$ such that
each vertex is recolored at most $4(n-1)$ times (for each recoloring of $\vw$
in $G'$ we recolor both $v$ and $w$ in $G$). 

Recall that we recolor each vertex at most twice when forming $\vph'_A$ from
$\vph_A$ and at most twice when forming $\vph'_B$ from $\vph_B$.
Thus, the total number of times we recolor each vertex is at most $4(n-1)+4\leq 4n$.
\end{proof}

\section*{Acknowledgments}
Thanks to Tao Wang for alerting us to two inaccuracies in an earlier version and suggesting ways to fix them.


\bibliographystyle{plainurl}
\footnotesize{
\bibliography{references}

\begin{thebibliography}{10}

\bibitem{BBFHMP}
Valentin Bartier, Nicolas Bousquet, Carl Feghali, Marc Heinrich, Benjamin
  Moore, and Th\'{e}o Pierron.
\newblock Recolouring planar graphs of girth at least five.
\newblock 2021.
\newblock \href {http://arxiv.org/abs/2112.00631} {\path{arXiv:2112.00631}}.

\bibitem{BJLPP}
Marthe Bonamy, Matthew Johnson, Ioannis Lignos, Viresh Patel, and Dani\"{e}l
  Paulusma.
\newblock Reconfiguration graphs for vertex colourings of chordal and chordal
  bipartite graphs.
\newblock {\em J. Comb. Optim.}, 27(1):132--143, 2014.
\newblock \href {https://doi.org/10.1007/s10878-012-9490-y}
  {\path{doi:10.1007/s10878-012-9490-y}}.

\bibitem{BousquetHeinrich}
Nicolas Bousquet and Marc Heinrich.
\newblock A polynomial version of {C}ereceda's conjecture.
\newblock {\em J. Combin. Theory Ser. B}, 155:1--16, 2022.
\newblock \href {http://arxiv.org/abs/1903.05619} {\path{arXiv:1903.05619}}.

\bibitem{CvJ}
Luis Cereceda, Jan van~den Heuvel, and Matthew Johnson.
\newblock Connectedness of the graph of vertex-colourings.
\newblock {\em Discrete Math.}, 308(5-6):913--919, 2008.
\newblock \href {https://doi.org/10.1016/j.disc.2007.07.028}
  {\path{doi:10.1016/j.disc.2007.07.028}}.

\bibitem{CvJ2}
Luis Cereceda, Jan van~den Heuvel, and Matthew Johnson.
\newblock Mixing 3-colourings in bipartite graphs.
\newblock {\em European J. Combin.}, 30(7):1593--1606, 2009.
\newblock \href {https://doi.org/10.1016/j.ejc.2009.03.011}
  {\path{doi:10.1016/j.ejc.2009.03.011}}.

\bibitem{DKT}
Zden{\v{e}}k Dvo{\v{r}}{\'a}k, Daniel Kr{\'a}l, and Robin Thomas.
\newblock Coloring triangle-free graphs on surfaces. {E}xtended abstract.
\newblock In {\em Proceedings of the {T}wentieth {A}nnual {ACM}-{SIAM}
  {S}ymposium on {D}iscrete {A}lgorithms}, pages 120--129. SIAM, Philadelphia,
  PA, 2009.

\bibitem{DvorakFeghali}
Zden\v{e}k Dvo\v{r}\'{a}k and Carl Feghali.
\newblock A {T}homassen-type method for planar graph recoloring.
\newblock {\em European J. Combin.}, 95:Paper No. 103319, 12, 2021.
\newblock \href {http://arxiv.org/abs/2006.09269} {\path{arXiv:2006.09269}}.

\bibitem{DP}
Zden\v{e}k Dvo\v{r}\'{a}k and Luke Postle.
\newblock Correspondence coloring and its application to list-coloring planar
  graphs without cycles of lengths 4 to 8.
\newblock {\em J. Combin. Theory Ser. B}, 129:38--54, 2018.
\newblock \href {http://arxiv.org/abs/1508.03437} {\path{arXiv:1508.03437}}.

\bibitem{Carl}
Carl Feghali.
\newblock Reconfiguring colorings of graphs with bounded maximum average
  degree.
\newblock {\em J. Combin. Theory Ser. B}, 147:133--138, 2021.
\newblock \href {http://arxiv.org/abs/1904.12698} {\path{arXiv:1904.12698}}.

\bibitem{Nishimura}
Naomi Nishimura.
\newblock Introduction to reconfiguration.
\newblock {\em Algorithms (Basel)}, 11(4):Paper No. 52, 25, 2018.
\newblock \href {https://doi.org/10.3390/a11040052}
  {\path{doi:10.3390/a11040052}}.

\bibitem{vandenHeuvel}
Jan van~den Heuvel.
\newblock The complexity of change.
\newblock In {\em Surveys in combinatorics 2013}, volume 409 of {\em London
  Math. Soc. Lecture Note Ser.}, pages 127--160. Cambridge Univ. Press,
  Cambridge, 2013.
\newblock \href {http://arxiv.org/abs/1312.2816} {\path{arXiv:1312.2816}}.

\bibitem{West}
Douglas~B. West.
\newblock {\em Introduction to graph theory}.
\newblock Prentice Hall, Inc., Upper Saddle River, NJ, 1996.

\end{thebibliography}
}

\end{document}